\newtheorem{theorem}{Theorem}[section]
\newtheorem{lem}[theorem]{Lemma}
\newtheorem{assumption}{Assumption}[section]
\numberwithin{equation}{section}
\begin{document}

\title[Prescribed mean curvature]{Prescribed mean curvature equation on the unit ball in the presence of reflection or rotation symmetry}

\author{Pak Tung Ho}
\address{Department of Mathematics, Sogang University, Seoul 121-742, Korea}
\email{paktungho@yahoo.com.hk, ptho@sogang.ac.kr}

\curraddr{Department of Mathematics, Princeton University, Fine Hall, Washington Road, Princeton NJ 08544 USA}

\email{ptho@princeton.edu}

\subjclass[2000]{Primary 53C44, 53A30; Secondary 35J93, 35B44}

\date{2nd January, 2017.}

\keywords{mean curvature; unit ball; Nirenberg's problem}

\begin{abstract}
Using the flow method, we prove some existence results
for the problem of prescribing the  mean curvature on the unit ball.
More precisely, we prove that there exists a conformal metric on the unit ball such that its
mean curvature is $f$, when $f$ possesses certain reflection or rotation symmetry.
\end{abstract}

\maketitle

\section{Introduction}

The problem of prescribing scalar curvature on a closed manifold has been studied extensively
for last few decades.
More precisely, let $(M,g)$ be an $n$-dimensional compact smooth Riemannian manifold without boundary. Given a smooth
function $f$ on $M$, can we find a metric conformal to $g$ such that its scalar curvature is $f$?
This has been studied in \cite{Aubin&Bismuth,Berger,Chen&Li,Ho1,Kazdan&Warner1,Kazdan&Warner2,Ouyang,Tang}.
When $(M,g)$ is the $n$-dimensional standard sphere $S^n$,
it is called the Nirenberg's problem and has been studied extensively. See \cite{Baird&Fardoun&Regbaoui,Chang&Gursky&Yang,Chang&Yang3,Chang&Yang1,Chang&Yang2,Chen&Li1,Han,Han&Li1,Schoen&Zhang,Wei&Xu,Xu&Yang}
and the references therein.
In particular, Chang-Yang
obtained in \cite{Chang&Yang3} a perturbation theorem which asserts
that there exists a conformal metric whose scalar curvature is equal to $f$,
provided that the degree condition holds for $f$ which is a positive Morse function
and is sufficiently closed to $n(n-1)$ in $C^0$ norm.
See also \cite{Chtioui&OuldAhmedou&Yacoub,Chtioui&OuldAhmedou&Yacoub1,Ho5,Ho2,Ho3,Ho4,Ngo&Zhang}
for related results of prescribing the Webster scalar curvature on
CR manifolds.

A geometric flow has been introduced to study the Nirenberg's problem
by Struwe in \cite{Struwe} for $n=2$, and has been generalized to $n\geq 3$
by Chen-Xu in \cite{Chen&Xu}.
More precisely, the scalar curvature flow
is defined as
$$\frac{\partial}{\partial t}u=\frac{n-2}{4}(\alpha f-R_{\tilde{g}})u,$$
where $R_{\tilde{g}}$ is the scalar curvature of $\tilde{g}=u^{\frac{4}{n-2}}g$ and $\alpha$ is a constant chosen to preserve the volume
along the flow. Using the scalar curvature flow, Chen-Xu \cite{Chen&Xu} was able to
prove Chang-Yang's result
with the quantitative bound on $\| f-n(n-1)\|_{C^0(S^n)}$.

Again using the scalar curvature flow, Leung-Zhou \cite{Leung&Zhou} proved an existence
result for prescribing scalar curvature with symmetry. To describe their result, we
have the following:
\begin{assumption}\label{assumption1.1}
$f$ is symmetric under a mirror reflection upon a hyperplane $\mathcal{H}\subset \mathbb{R}^{n+1}$
passing through the origin.
\end{assumption}

Under Assumption \ref{assumption1.1}, without loss of generality, we may assume that $\mathcal{H}$ is the hyperplane perpendicular to the
$x_1$-axis. Then the symmetry can be expressed as
$$f(\gamma_m(x))=f(x)$$
where $\gamma_m:S^n\to S^n$ is given by
$$\gamma_m(x_1,x_2,...,x_{n+1})=
(-x_1,x_2,...,x_{n+1})\hspace{2mm}\mbox{ for }x=(x_1,x_2,...,x_{n+1})\in S^n.$$
Then
$$\mathcal{F}=\mathcal{H}\cap S^n=\{(0,x_2,...,x_{n+1})\in S^n\}$$
is the fixed point set.

\begin{assumption}\label{assumption1.2}
$f$ is invariant under a rotation $\gamma_\theta$ of angle $\theta= \pi/ k$
with the rotation axis being a straight line in $\mathbb{R}^{n+1}$ passing through the origin.
Here $k>1$ is an integer.
\end{assumption}

Under Assumption \ref{assumption1.2}, without loss of generality, we may assume
that the rotation axis is the $x_{n+1}$-axis.
In this case,
$\mathcal{F}= \{N, S\}$ is the  fixed point set, where $N=(0,...,0,1)$ is the north pole
and $S=(0,...,0,-1)$ is the south pole.

With these assumptions, we can state the result of Leung-Zhou in \cite{Leung&Zhou}.
\begin{theorem}[Leung-Zhou \cite{Leung&Zhou}]
Suppose that $f$ is a positive smooth function on $S^n$ satisfying
Assumption $\ref{assumption1.1}$ or $\ref{assumption1.2}$.
Assume that
\begin{equation*}
x_m\in\mathcal{F}\mbox{ with }f(x_m)=\max_{\mathcal{F}}f\hspace{2mm}\Rightarrow\hspace{2mm}
\Delta_{S^n}f(x_m)>0
\end{equation*}
where $\Delta_{S^n}$ is the Laplacian of the standard metric of $S^n$, and
\begin{equation*}
\max_{S^n} f<2^{\frac{2}{n-2}}\Big(\max_{\mathcal{F}} f\Big).
\end{equation*}
Then $f$ can be realized as the scalar curvature of some metric conformal
to the standard metric of $S^n$.
\end{theorem}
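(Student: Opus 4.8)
The plan is to realize $f$ as a scalar curvature by running the normalized scalar curvature flow $\partial_t u = \frac{n-2}{4}(\alpha f - R_{\tilde g})u$ from the symmetric initial datum $u_0\equiv 1$, and to show that the flow converges because the two hypotheses exactly exclude every possible concentration scenario. First I would record the structure of the flow as developed by Struwe and Chen-Xu: with $\alpha=\alpha(t)$ chosen so that $\int_{S^n} f\,u^{2n/(n-2)}\,dV_g$ stays constant, short-time existence and parabolic regularity hold, the flow exists for all time, and the Yamabe-type energy
$$E(u)=\frac{\int_{S^n}\big(\tfrac{4(n-1)}{n-2}|\nabla u|^2 + R_g u^2\big)\,dV_g}{\big(\int_{S^n} f\, u^{2n/(n-2)}\,dV_g\big)^{(n-2)/n}}$$
is non-increasing, with dissipation controlled by $\int_{S^n}(\alpha f-R_{\tilde g})^2\,dV_{\tilde g}$. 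The decisive preliminary observation is that, since $f$ is invariant under the reflection $\gamma_m$ (Assumption \ref{assumption1.1}) or the rotation $\gamma_\theta$ (Assumption \ref{assumption1.2}) and $u_0\equiv 1$ is invariant, uniqueness for the parabolic equation forces $u(\cdot,t)$ to retain this symmetry for all $t$.

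Next I would analyze the limit $t\to\infty$. If the flow does not concentrate, standard parabolic estimates yield subconvergence to a positive smooth $u_\infty$ solving $R_{\tilde g_\infty}=\alpha_\infty f$ with $\tilde g_\infty=u_\infty^{4/(n-2)}g$; a constant rescaling of the metric then makes the scalar curvature exactly $f$, using $f>0$. Everything therefore reduces to excluding concentration. Applying concentration-compactness to a blow-up sequence, I would show the loss of compactness is carried by a finite collection of standard bubbles, and — this is the key use of symmetry — the set of concentration points is invariant under $\gamma_m$ (resp. $\gamma_\theta$). Consequently any concentration point lying off the fixed-point set $\mathcal{F}$ appears together with its entire orbit, that is, with at least one other bubble in the reflection case and $2k$ bubbles in the rotation case.

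The two hypotheses now enter as two independent exclusions. A single bubble concentrating at $p$ carries energy $\simeq Y(S^n)\,f(p)^{-(n-2)/n}$, while $m\geq 2$ bubbles carry energy $\simeq m^{2/n}\,Y(S^n)\,f(p)^{-(n-2)/n}$, where $Y(S^n)$ denotes the round Yamabe energy. Since the energy along the flow never exceeds $E(u_0)$ and the cheapest single bubble sits at a maximum of $f$ on $\mathcal{F}$, the hypothesis
$$\max_{S^n} f < 2^{\frac{2}{n-2}}\Big(\max_{\mathcal{F}} f\Big)$$
is precisely the inequality $Y(S^n)(\max_{\mathcal{F}} f)^{-(n-2)/n} < 2^{2/n}\,Y(S^n)(\max_{S^n} f)^{-(n-2)/n}$, which makes the two-bubble (off-$\mathcal{F}$) energy strictly larger than the available energy. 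This forces any concentration to consist of a \emph{single} bubble located at a point $p\in\mathcal{F}$, necessarily with $f(p)=\max_{\mathcal{F}} f$.

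Finally I would rule out this last surviving scenario by a Pohozaev/Kazdan-Warner analysis of the bubble at $p$. Expanding the energy in the concentration parameter $\lambda$ produces, to leading order, a term proportional to $\Delta_{S^n} f(p)\,\lambda^{-2}$ that governs whether concentration can lower the energy; the balance law shows that a genuine blow-up at $p$ requires $\Delta_{S^n} f(p)\leq 0$. This contradicts the hypothesis $\Delta_{S^n} f(x_m)>0$ at the maximizer $x_m$ of $f|_{\mathcal{F}}$, so no concentration occurs and the flow converges as above. The main obstacle is exactly this last step: making the blow-up expansion rigorous — using symmetry to pin the bubble's center at the fixed point and extracting the correct sign of the $\Delta_{S^n} f(p)$ term from the flow's Pohozaev identity — is where the real work lies, with the energy comparison of the previous paragraph being the more routine, if indispensable, ingredient.
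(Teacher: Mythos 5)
Your proposal follows the same overall strategy that this paper uses for its analogous Theorem \ref{main} (and that Leung--Zhou use for the statement at hand): run the normalized curvature flow, use uniqueness to propagate the symmetry, exclude multi-point concentration by an energy count, and exclude the remaining single bubble on $\mathcal{F}$ via the blow-up condition $\Delta_{S^n}f(Q)\leq 0$ together with hypothesis \eqref{8}. However, there is one genuine gap that breaks the quantitative heart of the argument: the choice of initial datum $u_0\equiv 1$. Every exclusion in your proof is an energy comparison against $E(u_0)$, and with $u_0\equiv 1$ one only gets $E(u_0)=Y(S^n)\big(\fint_{S^n}f\,d\mu\big)^{-(n-2)/n}$. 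The hypothesis $\max_{S^n}f<2^{2/(n-2)}\max_{\mathcal F}f$ controls $\max_{S^n}f$ relative to $\max_{\mathcal F}f$, not relative to the average of $f$; if $f$ is close to its maximum only on a small set and small elsewhere, then $\fint f\ll\max_{\mathcal F}f$ and $E(1)$ exceeds the two-bubble threshold $2^{2/n}Y(S^n)(\max_{S^n}f)^{-(n-2)/n}$, so neither the two-bubble exclusion nor your claim that a single bubble on $\mathcal F$ must sit at a maximizer of $f|_{\mathcal F}$ follows. The paper (Lemma \ref{lem3.1}) instead constructs $u_0$ as a symmetric, highly concentrated near-extremal bubble centered at the point of $\mathcal F$ where $f|_{\mathcal F}$ attains its maximum, which yields the sharp bound $E_f[u_0]\leq(\max_{\mathcal F}f)^{-(n-1)/n}+\epsilon$ (in the normalization of \eqref{5}--\eqref{6}); this is exactly the energy level at which your arithmetic identifying the hypothesis with the two-bubble inequality becomes valid, and it is what forces the blow-up point $Q\in\mathcal F$ to satisfy $f(Q)\geq\max_{\mathcal F}f-O(\epsilon)$.

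Two smaller points. First, your final step needs the bubble to be \emph{exactly} at a maximizer of $f|_{\mathcal F}$ before invoking $\Delta_{S^n}f>0$ there; the energy only pins $Q$ to a near-maximizer, and the correct bookkeeping is the paper's claim \eqref{4.1}: by hypothesis \eqref{8} and compactness there is a uniform $\delta>0$ with $f(Q)\leq\max_{\mathcal F}f-\delta$ for every $Q\in\mathcal F$ with $\Delta_{S^n}f(Q)\leq 0$, and this gap contradicts the energy identity $\lim_k E_f[u_k]=f(Q)^{-(n-1)/n}$ together with monotonicity of $E_f$. Second, the Pohozaev-type fact that a concentration point must satisfy $\Delta_{S^n}f(Q)\leq 0$ is not re-derived here; it is quoted as part of the blow-up analysis (Lemma \ref{lem2.4}(c), from \cite{Chen&Xu} in the scalar curvature setting and \cite{Xu&Zhang} in the mean curvature setting), so presenting it as ``the real work'' of this proof somewhat misplaces the emphasis: the real work is the construction of the symmetric low-energy initial datum and the $L=1$ count.
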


Note that  existence results for prescribing scalar curvature with symmetry
were obtained earlier by Moser \cite{Moser} and by Escobar-Schoen \cite{Escobar&Schoen}.

The problem of prescribing the scalar curvature or the mean curvature has been studied on manifolds with boundary. See \cite{Chen&Ho,Chen&Ho&Sun,Djadli&Malchiodi&OuldAhmedou,Escobar1,Han&Li2,Sharaf,Zhang}
for example. In this paper, we consider the following problem of prescribing the mean curvature on the unit ball,
which is a natural analogy of the prescribing scalar curvature problem:
Let $B^{n+1}$ be the $(n+1)$-dimensional unit ball equipped with the flat metric $g_0$, i.e.
$$B^{n+1}=\Big\{(x_1,...,x_n,x_{n+1})\in\mathbb{R}^{n+1}: \sum_{i=1}^{n+1}x_i^2\leq 1\Big\}.$$
The boundary of $B^{n+1}$ is the $n$-dimensional unit sphere $S^n$, i.e.
$$\partial B^{n+1}=S^n=\Big\{(x_1,...,x_n,x_{n+1})\in\mathbb{R}^{n+1}: \sum_{i=1}^{n+1}x_i^2=1\Big\}.$$
The mean curvature $H_{g_0}$ of $S^n$ with respect to $g_0$
is equal to $1$, i.e.
$H_{g_0}=1$.
On the other hand, the metric induced by $g_0$ on $S^n$ is the standard metric $g_{S^n}$ of $S^n$.
We study the following problem: given a smooth function $f$ on $S^n$,
find a metric conformal to $g_0$ such that it is flat in the interior of $B^{n+1}$
and its mean curvature is equal to $f$ on $\partial B^{n+1}=S^n$.
The problem is equivalent to finding a positive harmonic function $u$ in the ball with nonlinear
boundary condition:
\begin{equation}\label{0}
\begin{split}
\Delta_{g_0} u&=0\hspace{2mm}\mbox{ in }B^{n+1},\\
\frac{2}{n-1}\frac{\partial u}{\partial\nu_{g_0}}+u&=fu^{\frac{n+1}{n-1}}
\hspace{2mm}\mbox{ on }S^{n}.
\end{split}
\end{equation}
Here, $\Delta_{g_0}$ is the Laplacian of $g_0$ and
$\displaystyle\frac{\partial}{\partial\nu_{g_0}}$ is the outward normal derivative of $g_0$.
See \cite{Abdelhedi&Chtioui,Abdelhedi&Chtioui&OuldAhmedou,Cao&Peng,Cherrier,Djadli&Malchiodi&OuldAhmedou,Escobar1,Escobar&Garcia,
Sharaf,Sharaf&Alharthy&Altharwi} and references therein for
the results related to this problem.
In particular, Chang-Xu-Yang proved in
\cite{Chang&Xu&Yang}
that (\ref{0}) has a solution when
$f$ is a positive Morse function satisfying
the degree condition
and being sufficiently closed to $1$ in $C^0$ norm.
By the method of geometric flow,
Xu-Zhang \cite{Xu&Zhang}
proved Chang-Xu-Yang's result
with the quantitative bound on $\| f-1\|_{C^0(S^n)}$.

Inspired by the result of Leung-Zhou \cite{Leung&Zhou}, we study the problem of
prescribing the mean curvature on the unit ball with symmetry.
We prove the following:

\begin{theorem}\label{main}
Suppose that $f$ is a positive smooth function on $S^n$ satisfying
Assumption $\ref{assumption1.1}$ or $\ref{assumption1.2}$.
Assume that
\begin{equation}\label{8}
x_m\in\mathcal{F}\mbox{ with }f(x_m)=\max_{\mathcal{F}}f\hspace{2mm}\Rightarrow\hspace{2mm}
\Delta_{S^n}f(x_m)>0
\end{equation}
where $\Delta_{S^n}$ is the Laplacian of the standard metric of $S^n$, and
\begin{equation}\label{9}
\max_{S^n} f<2^{\frac{1}{n-1}}\Big(\max_{\mathcal{F}} f\Big).
\end{equation}
Then $f$ can be realized as the mean curvature
of some conformal metric
on the unit ball $B^{n+1}$.
\end{theorem}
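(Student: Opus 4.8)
The plan is to solve \eqref{0} by means of a normalized boundary mean curvature flow, exploiting the symmetry to control the concentration behaviour. Writing $\tilde g=u^{\frac{4}{n-1}}g_0$ for a positive function $u$ that is harmonic in $B^{n+1}$, the boundary condition in \eqref{0} becomes $H_{\tilde g}=f$, where $H_{\tilde g}$ is the mean curvature of $S^n$ in $\tilde g$. I would study the flow
$$\frac{\partial u}{\partial t}=-\frac{n-1}{2}\bigl(H_{\tilde g}-\alpha(t)f\bigr)u \quad\text{on } S^n,\qquad \Delta_{g_0}u=0 \quad\text{in } B^{n+1},$$
where $\alpha(t)$ is the Lagrange multiplier chosen so that the boundary volume $\int_{S^n}u^{\frac{2n}{n-1}}\,d\sigma$ is preserved. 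First I would establish short-time existence, positivity, and, crucially, invariance of the flow under the reflection $\gamma_m$ (Assumption \ref{assumption1.1}) or the rotation $\gamma_\theta$ (Assumption \ref{assumption1.2}): since $f$ is symmetric, a symmetric initial datum stays symmetric by uniqueness. The natural energy $E(u)=\int_{B^{n+1}}|\nabla u|^2\,dx+(n-1)\int_{S^n}u^2\,d\sigma$ is monotone along the flow, which I would use to obtain global existence together with a uniform bound on $E(u(t))$.

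The heart of the argument is the dichotomy: either $u(t)$ subconverges in $C^2$ to a positive solution of \eqref{0}, which produces the desired conformal metric, or the flow concentrates. Adapting the concentration--compactness analysis for the trace--Sobolev embedding, I expect that in the blow-up case the energy accumulates at a finite set of points $\{p_1,\dots,p_m\}\subset S^n$, each carrying at least the energy $S_n\,f(p_i)^{-\frac{n-1}{2}}$ of a single standard (Escobar) bubble, where $S_n$ is the sharp trace--Sobolev constant. By the symmetry of the flow this blow-up set is invariant under $\gamma_m$ or $\gamma_\theta$. I would take as initial datum a symmetric bubble concentrated at a point $x_m\in\mathcal F$ with $f(x_m)=\max_{\mathcal F}f$, so that $E(u(0))\approx S_n\,(\max_{\mathcal F}f)^{-\frac{n-1}{2}}$; condition \eqref{9} is precisely the inequality $(\max_{\mathcal F}f)^{-\frac{n-1}{2}}<2(\max_{S^n}f)^{-\frac{n-1}{2}}$, so this energy lies strictly below the two-bubble level $2\,S_n\,(\max_{S^n}f)^{-\frac{n-1}{2}}$.

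With this energy ceiling in place I would exclude the concentration scenarios in turn. If the blow-up set contained a point off the fixed-point set $\mathcal F$, its entire symmetry orbit (of cardinality at least two) would blow up simultaneously, forcing $E\ge 2\,S_n(\max_{S^n}f)^{-\frac{n-1}{2}}$ and contradicting the ceiling; the same energy gap rules out any configuration of two or more bubbles, even at fixed points. Hence only a single bubble at a single point $p\in\mathcal F$ remains possible, and the energy identity forces $f(p)=\max_{\mathcal F}f$, so that $p$ is one of the points $x_m$ appearing in \eqref{8}. Because $p$ lies on $\mathcal F$ and maximizes $f$ along $\mathcal F$, symmetry makes the normal derivative of $f$ vanish there, so $p$ is in fact a critical point of $f$ on $S^n$.

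Finally I would derive the Kazdan--Warner/Pohozaev obstruction for single-point blow-up at a critical point $p$ of $f$: concentration is possible only if $\Delta_{S^n}f(p)\le 0$. Since \eqref{8} gives $\Delta_{S^n}f(p)>0$ at every $x_m$ realizing $\max_{\mathcal F}f$, this last scenario is also impossible. With every concentration scenario excluded, the flow must subconverge to a positive solution $u$ of \eqref{0}, and $\tilde g=u^{\frac{4}{n-1}}g_0$ is the sought conformal metric on $B^{n+1}$ with boundary mean curvature $f$. I expect the main obstacle to be the concentration--compactness step in the boundary (trace) setting: showing that the lost energy is quantized into bubbles of energy exactly $S_n\,f(p_i)^{-\frac{n-1}{2}}$ and that the blow-up points are genuine critical points of $f$, together with the precise Pohozaev computation on the half-space model that yields the sign condition on $\Delta_{S^n}f$.
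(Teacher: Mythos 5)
Your proposal follows essentially the same route as the paper: the normalized mean curvature flow of Xu--Zhang with its monotone normalized energy, preservation of the reflection/rotation symmetry by uniqueness, a symmetric bubble at a maximum point of $f|_{\mathcal F}$ as initial datum, exclusion of multi-bubble and off-$\mathcal F$ concentration via the energy ceiling encoded in \eqref{9}, and exclusion of the remaining single fixed-point bubble via the sign condition \eqref{8} against the blow-up obstruction $\Delta_{S^n}f(Q)\le 0$. The only detail you elide is that the initial energy is $(\max_{\mathcal F}f)^{-\frac{n-1}{n}}+\epsilon$ rather than exactly the one-bubble level, so to rule out the last scenario one needs (as the paper does) a uniform gap $f(S)\ge f(x_c)+\delta$ for all $x_c\in\mathcal F$ with $\Delta_{S^n}f(x_c)\le 0$, which follows from \eqref{8} by compactness.
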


We remark that Escobar has also studied in \cite{Escobar1} the existence
of a positive solution to (\ref{0}) when $f$ has symmetry.
The proof of Theorem \ref{main} follows the arguments of Leung-Zhou in \cite{Leung&Zhou}.
We also remark that we have used the arguments of Leung-Zhou to study
the problem of prescribing Webster scalar curvature on the CR sphere with symmetry. See \cite{Ho2}.

\textit{Acknowledgements.} Part of the work was done while the author was visiting Princeton University
in 2016. He thanks Prof. Paul C. Yang for the invitation and is grateful to Princeton University for the kind hospitality.
The author was supported by the National Research Foundation of Korea (NRF) grant funded
by the Korea government (MEST) (No.201631023.01).

\section{The flow and its properties}

Let $B^{n+1}$ be the $(n+1)$-dimensional unit ball equipped with the flat metric $g_0$.
Then the $n$-dimensional sphere $S^n=\partial B^{n+1}$ is the boundary of $B^{n+1}$.
Let $f$ be a positive smooth function defined on $S^n$.
Given a smooth function $u_0$ in $B^{n+1}$, we consider the flow
\begin{equation}\label{1}
u_t=\frac{n-1}{4}(\alpha f-H_{g})u\hspace{2mm}\mbox{ on }S^n
\end{equation}
for $t\geq 0$
with the initial condition
$$u|_{t=0}=u_0,$$
where
\begin{equation}\label{2}
H_g=u^{-\frac{n+1}{n-1}}\left(\frac{2}{n-1}\frac{\partial u}{\partial\nu_{g_0}}+u\right)\hspace{2mm}\mbox{ on }S^n
\end{equation}
is the mean curvature of the conformal metric $g=u^{\frac{4}{n-1}}g_0$,
and
\begin{equation}\label{3}
\Delta_{g_0} u=0\hspace{2mm}\mbox{ in }B^{n+1}.
\end{equation}
Here, $\Delta_{g_0}$ is the Laplacian of $g_0$ and
$\displaystyle\frac{\partial}{\partial\nu_{g_0}}$ is the outward normal derivative of $g_0$.
Also, $\alpha=\alpha(t)$ is defined as
\begin{equation}\label{4}
\alpha=\left(\fint_{S^n} H_g u^{\frac{2n}{n-1}}d\mu\right)\Big/
\left(\fint_{S^n}f u^{\frac{2n}{n-1}}d\mu\right),
\end{equation}
where $d\mu$ is the volume form of the standard metric $g_{S^n}$ on $S^n$ and
$$\fint_{S^n}\varphi d\mu=\frac{1}{\omega_n}\int_{S^n}\varphi d\mu\mbox{ for all }\varphi\in C^\infty(S^n),$$
where $\omega_n=\displaystyle\int_{S^n}d\mu$ is the volume of $S^n$ with respect to $g_{S^n}$.

We define the functionals
\begin{equation}\label{5}
E[u]=\fint_{S^n}\left(\frac{2}{n-2}\frac{\partial u}{\partial\nu_{g_0}}+u\right)u d\mu
=\fint_{S^n}H_g u^{\frac{2n}{n-1}}d\mu
\end{equation}
and
\begin{equation}\label{6}
E_f[u]=E[u]\Big/\left(\fint_{S^n}f u^{\frac{2n}{n-1}}d\mu\right)^{\frac{n-1}{n}}.
\end{equation}
It follows from Lemma 2.2 in \cite{Xu&Zhang} that
$$\frac{d}{dt}E_f[u]\leq 0$$
along the flow (\ref{1}).
In particular,
we have
\begin{equation}\label{7}
E_f[u]\leq E_f[u_0]\hspace{2mm}\mbox{ for all }t\geq 0.
\end{equation}

It follows from section 2.3 in \cite{Xu&Zhang} that
(\ref{1})-(\ref{3}) has a unique solution
$u(x,t)$ on $B^{n+1}\times [0,\infty)$  such
that, given $T>0$, there exists a constant $C=C(T)$ such that
\begin{equation}\label{2.1}
C^{-1}\leq u(x,t)\leq C\hspace{2mm}\mbox{ for all }(x,t)\in B^{n+1}\times [0,T].
\end{equation}

\begin{lem}\label{lem2.1}
Given an isometry $\gamma: (S^n,g_{S^n})\to (S^n,g_{S^n})$, we assume that
\begin{equation}\label{2.2}
f(\gamma(x))=f(x)\hspace{2mm}\mbox{ and }\hspace{2mm}u_0(\gamma(x))=u_0(x)\mbox{ for all }x\in S^n.
\end{equation}
Let $u(x,t)$ be the solution of \eqref{1}-\eqref{3} with initial data $u_0$.
Then
\begin{equation}\label{2.3}
u(\gamma(x),t)=u(x,t)\hspace{2mm}\mbox{ for all }(x,t)\in S^n\times [0,\infty).
\end{equation}
\end{lem}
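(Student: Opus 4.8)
The plan is to reduce the claim to the uniqueness of the flow recorded just before the statement. Every isometry of the round sphere $(S^n,g_{S^n})$ is the restriction to $S^n$ of an orthogonal transformation of $\mathbb{R}^{n+1}$; since such a transformation fixes the origin and preserves the Euclidean metric, $\gamma$ extends to a linear isometry $\tilde{\gamma}:(B^{n+1},g_0)\to(B^{n+1},g_0)$ with $\tilde{\gamma}|_{S^n}=\gamma$. As an isometry, $\tilde{\gamma}$ commutes with $\Delta_{g_0}$, maps $S^n$ to itself sending the outward unit normal to the outward unit normal, and preserves the boundary volume form $d\mu$.

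I would then set $v(X,t)=u(\tilde{\gamma}(X),t)$ for $(X,t)\in B^{n+1}\times[0,\infty)$ and show that $v$ solves \eqref{1}--\eqref{3} with the same initial datum $u_0$; the lemma then follows from uniqueness together with $v(x,0)=u(\tilde{\gamma}(x),0)=u_0(\gamma(x))=u_0(x)$ on $S^n$. That $v$ is harmonic in $B^{n+1}$ is immediate from $\Delta_{g_0}v=(\Delta_{g_0}u)\circ\tilde{\gamma}=0$. For the boundary operator, since $\tilde{\gamma}$ preserves the normal direction one has $\tfrac{\partial v}{\partial\nu_{g_0}}(x)=\big(\tfrac{\partial u}{\partial\nu_{g_0}}\big)(\gamma(x))$, and combining this with \eqref{2} gives the pointwise identity $H_{\tilde{g}}(x,t)=H_{g}(\gamma(x),t)$ on $S^n$, where $\tilde{g}=v^{\frac{4}{n-1}}g_0$ and $g=u^{\frac{4}{n-1}}g_0$.

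The remaining point is that the normalizing constant $\alpha$ in \eqref{4} is unchanged when $u$ is replaced by $v$. Using the mean-curvature identity just obtained together with $f(\gamma(x))=f(x)$, and performing the change of variables $y=\gamma(x)$ — which preserves $d\mu$ — one checks that both the numerator $\fint_{S^n}H_{\tilde{g}}\,v^{\frac{2n}{n-1}}\,d\mu$ and the denominator $\fint_{S^n}f\,v^{\frac{2n}{n-1}}\,d\mu$ agree with the corresponding integrals computed from $u$, so $\alpha$ is the same function of $t$ for $v$ as for $u$. Substituting all of these into the evolution equation \eqref{1} for $u$, evaluated at $\gamma(x)$, yields exactly \eqref{1} for $v$, whence $v$ is a solution of the same problem and uniqueness forces $v\equiv u$.

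The one step that requires genuine care, rather than bookkeeping, is the invariance of $\alpha$: it is what couples the local evolution equation to the global normalization, and it works precisely because $\gamma$ is a measure-preserving symmetry of both $f$ and of $H_g$. Everything else is a direct consequence of $\tilde{\gamma}$ being an isometry of the flat ball, and I expect no analytic difficulty beyond invoking the uniqueness statement already granted in the excerpt.
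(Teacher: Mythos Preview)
Your argument is correct and follows essentially the same route as the paper: show that $u(\gamma(\cdot),t)$ is again a solution of \eqref{1}--\eqref{3} with initial datum $u_0$, and then invoke uniqueness of the flow. If anything, your version is more careful than the paper's, since you make explicit the extension of $\gamma$ to an orthogonal map of $B^{n+1}$ and the invariance of the normalizing constant $\alpha$, both of which the paper leaves implicit.
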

\begin{proof}
Since $\gamma: (S^n,g_{S^n})\to (S^n,g_{S^n})$ is isometry,
for any $\varphi\in C^\infty(B^{2n+1})$, we have
\begin{equation}\label{2.4}
\left(\frac{\partial(\varphi\circ\gamma)}{\partial\nu_{g_0}}\right)(x)=\left(\frac{\partial\varphi}{\partial\nu_{g_0}}\right)(\gamma(x))
\mbox{ for all }x\in S^n.
\end{equation}
Since
the mean curvature of $g(x,t)=u(x,t)^{\frac{4}{n-1}}g_0$ satisfies
\begin{equation*}
H_{g(x,t)}(x)=u(x,t)^{-\frac{n+1}{n-1}}\left(\frac{2}{n-1}\frac{\partial}{\partial\nu_{g_0}}u(x,t)+u(x,t)\right)
\mbox{ for }x\in S^n,
\end{equation*}
it follows from (\ref{2.3}) and (\ref{2.4}) that
\begin{equation}\label{2.5}
H_{g(x,t)}(\gamma(x))=H_{g(\gamma(x),t)}(x)\mbox{ for all }x\in S^n.
\end{equation}

By (\ref{1})-(\ref{4}), the flow can be written as
\begin{equation}\label{2.13}
\begin{split}
\Delta_{g_0} u&=0\hspace{2mm}\mbox{ in }B^{n+1},\\
u_t&=-\frac{n-1}{4}
u^{-\frac{2}{n-1}}\left(\frac{2}{n-1}\frac{\partial u}{\partial\nu_{g_0}}+u\right)
+\frac{n-1}{4}
\left(\frac{\int_{S^n} H_g u^{\frac{2n}{n-1}}d\mu}{\int_{S^n}f u^{\frac{2n}{n-1}}d\mu}\right)
f u\hspace{2mm}\mbox{ on }S^n.
\end{split}
\end{equation}
From (\ref{2.2}), (\ref{2.4}) and (\ref{2.5}),
we know that $u(\gamma(x),t)$ is also a solution of (\ref{2.13}) with initial value $u_0$.
Now the assertion (\ref{2.3}) follows from
the uniqueness result stated in
Lemma 4.8 in \cite{Chen&Ho}.
\end{proof}

For $r > 0$ and $x_0\in S^n$, set $B^+_r (x_0) = \{x \in B^{n+1} : d_{g_0}(x, x_0) < r \}$ and
$\partial' B^+_r (x_0) = \partial B^+_r (x_0) \cap S^n$.
The following lemma was proved in \cite{Xu&Zhang}: (see section 4.2 in \cite{Xu&Zhang})

\begin{lem}\label{lem2.2}
Let $u(t)$ be the solution of the flow \eqref{1}. Let $(t_k)_{k=1}^\infty$
be any time sequence with $t_k\to\infty$ as $k\to\infty$. Consider the sequence $u_k:=u(t_k)$
and corresponding metrics $g_k = u(t_k)^{\frac{4}{n-1}}g_0$. Then, up to a subsequence,
either\\
(i) the sequence $u_k$ is uniformly bounded in $W^{1,p}(S^n, g_{S^n})\hookrightarrow L^\infty(S^n)$ for some
$p > n$; or\\
(ii) there exists a subsequence of $u_k$ and finitely many points $x_1, . . . , x_L\in S^n$ such
that for any $r > 0$ and any $l \in \{1, . . . , L\}$ there holds
\begin{equation}\label{2.6}
\liminf_{k\to\infty}\left(\omega_n^{-1}\int_{\partial'B_r(x_l)}|H_{g_k}|^nd\mu_{g_k}\right)^{\frac{1}{n}}\geq 1.
\end{equation}
In addition, if the alternative (ii) occurs, the sequence $u_k$ is
also uniformly bounded in $L^p$ on any compact subset of $(S^n\setminus\{x_l,...,x_L \}, g_{S^n})$.
\end{lem}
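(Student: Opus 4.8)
The plan is to establish the dichotomy by a concentration--compactness analysis of the conformal volume and curvature measures associated with $u_k$, in the spirit of Struwe's global compactness for the two--dimensional flow \cite{Struwe} and its higher--dimensional counterpart \cite{Chen&Xu}. First I would record the two basic a priori bounds that the flow supplies. The normalization constant $\alpha$ in \eqref{4} is chosen so that the conformal boundary volume $V(t)=\int_{S^n}u^{\frac{2n}{n-1}}\,d\mu$ is constant along \eqref{1}: a direct computation using $u_t=\frac{n-1}{4}(\alpha f-H_g)u$ confirms $\frac{d}{dt}V\equiv 0$, so I may normalize $V(t)\equiv\omega_n$. Combined with the monotonicity $E_f[u]\le E_f[u_0]$ from \eqref{7} and the fixed volume, this yields a uniform bound $E[u_k]\le C$, which in turn bounds the Dirichlet energy $\int_{B^{n+1}}\abs{\nabla\widehat{u}_k}^2\,dx$ of the harmonic extensions. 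After passing to a subsequence I would extract weak limits: $u_k\rightharpoonup u_\infty$ in the relevant $H^{1/2}$--type space, together with weak-$\ast$ limits of the finite measures $u_k^{\frac{2n}{n-1}}\,d\mu\rightharpoonup\nu$ and $\abs{H_{g_k}}^n\,d\mu_{g_k}\rightharpoonup\sigma$ on $S^n$.

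Next I would apply the concentration--compactness principle of Lions to the trace Sobolev embedding for the extension operator (Escobar's inequality; see \cite{Escobar1,Escobar&Garcia}). This produces an at most countable set of points $\{x_l\}\subset S^n$ carrying the atomic parts $\nu_l=\nu(\{x_l\})$ and $\sigma_l=\sigma(\{x_l\})$, with a reverse Sobolev inequality relating $\nu_l$ and $\sigma_l$ at each atom. The dichotomy then falls out. If $\nu$ has no atoms, the embedding is compact along the sequence, $u_k\to u_\infty$ strongly in $L^{\frac{2n}{n-1}}(S^n)$, and standard elliptic bootstrapping for the nonlinear boundary problem \eqref{2}--\eqref{3} upgrades this to a uniform $W^{1,p}(S^n)$ bound with $p>n$; this is alternative (i). If instead atoms are present, I label their locations $x_1,\dots,x_L$; each atom must absorb at least a fixed quantum of volume and energy, so the uniform bound on $E[u_k]$ forces $L<\infty$. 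Away from $\{x_1,\dots,x_L\}$ the measure $\nu$ is nonatomic, no concentration occurs, and local interior--boundary elliptic estimates give the claimed uniform $L^p$ bound on compact subsets of $S^n\setminus\{x_1,\dots,x_L\}$.

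The main work---and the step I expect to be the principal obstacle---is the sharp lower bound \eqref{2.6} at the concentration points. Here I would perform a blow--up at each $x_l$: using the conformal (M\"obius) transformations of $B^{n+1}$ I rescale $u_k$ so that the concentrating mass is normalized, choosing the rescaling so that the limit is a genuine nontrivial bubble rather than degenerating to zero or escaping to infinity. The rescaled sequence then converges to a solution of the limiting boundary Yamabe equation on the half--space $\mathbb{R}^{n+1}_+$, which by the Li--Zhu/Escobar classification is, up to a conformal motion, the pullback of the round sphere with constant mean curvature $1$. For this limit profile one has $\big(\omega_n^{-1}\int_{S^n}\abs{H}^n\,d\mu_g\big)^{1/n}=1$, and tracking the conformally invariant integral $\int\abs{H_{g_k}}^n\,d\mu_{g_k}$ through the rescaling shows that a full quantum $\omega_n$ of this integral concentrates at $x_l$; this gives \eqref{2.6} for every fixed $r>0$ as $k\to\infty$. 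The delicate points are establishing the separation of distinct bubbles and ruling out any loss of curvature in the neck regions joining a bubble to the background, so that the entire quantum is captured inside each arbitrarily small cap $\partial'B_r(x_l)$.
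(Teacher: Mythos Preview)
The paper does not prove this lemma at all: it is quoted verbatim from Xu--Zhang \cite{Xu&Zhang}, with the single sentence ``The following lemma was proved in \cite{Xu&Zhang}: (see section 4.2 in \cite{Xu&Zhang})'' standing in for the argument. So there is no proof in the paper to compare your proposal against.

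That said, your outline is the expected one and almost certainly matches, in broad strokes, what Xu--Zhang carry out: uniform volume and energy bounds from \eqref{4}--\eqref{7}, Lions-type concentration--compactness for the trace Sobolev/Escobar inequality to isolate the atoms, local elliptic regularity away from the atoms, and a blow-up at each atom combined with the Li--Zhu/Escobar classification of entire solutions on $\mathbb{R}^{n+1}_+$ to pin down the quantum $\omega_n$ in \eqref{2.6}. You correctly flag the genuine technical work---bubble separation and exclusion of neck losses---as the crux; these are exactly the steps that require the detailed estimates in \cite{Xu&Zhang} (and, for the flow context, the $L^p$ decay of $\alpha f-H_g$ from their Lemma~3.2). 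As written your proposal is a coherent plan rather than a proof, but since the paper itself defers entirely to the reference, your sketch is already more than what appears here.
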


\begin{lem}\label{lem2.3}
Suppose that Lemma \ref{lem2.2}(ii) occurs.
For a point $x_0\in S^n$ with
\begin{equation}\label{2.7}
\max_{S^n}f<2^{\frac{1}{n-1}}f(x_0),
\end{equation}
suppose that the initial data $u_0$ satisfies
\begin{equation}\label{2.8}
E_f[u_0]\leq\frac{1}{f(x_0)^{\frac{n-1}{n}}}+\epsilon.
\end{equation}
Then for $\epsilon>0$ small enough, we have $L=1$.
\end{lem}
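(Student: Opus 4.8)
The plan is to argue by contradiction: I assume the blow-up set of alternative (ii) has $L\ge 2$ points and show that this forces $\lim_k E_f[u_k]$ strictly above $f(x_0)^{-(n-1)/n}$, contradicting the monotonicity \eqref{7} together with the hypothesis \eqref{2.8}. The starting point is an exact algebraic identity along the flow. Writing $D(t)=\fint_{S^n} f\,u^{\frac{2n}{n-1}}\,d\mu$, the definitions \eqref{4}, \eqref{5}, \eqref{6} give $\alpha=E[u]/D$, and hence
\[
E_f[u]=\frac{E[u]}{D^{\frac{n-1}{n}}}=\alpha\,D^{\,1-\frac{n-1}{n}}=\alpha\,D^{\frac1n}.
\]
So it suffices to bound $\alpha_k D_k^{1/n}$ from below, where $\alpha_k=\alpha(t_k)$ and $D_k=D(t_k)$. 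Since $f>0$ and $E_f$ stays in a bounded range by \eqref{7}, the $\alpha_k$ lie between fixed positive constants; after passing to a subsequence I may assume $\alpha_k\to\alpha_\infty\in(0,\infty)$ and $D_k\to D_\infty$.

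Next I would quantify the mass carried by each concentration point. Set $\mu_k=u_k^{\frac{2n}{n-1}}d\mu=d\mu_{g_k}$; after a further subsequence $\mu_k\rightharpoonup\mu_\infty$ as measures on $S^n$, with $\mu_\infty=\sum_{l=1}^L m_l\,\delta_{x_l}+\mu_{\mathrm{reg}}$, where $m_l=\mu_\infty(\{x_l\})\ge 0$ and $\mu_{\mathrm{reg}}\ge 0$ carries no atom at the $x_l$. The feature of the flow I would exploit is its almost-CMC behaviour: the dissipation underlying the monotonicity \eqref{7} gives finite total $\int_0^\infty\fint_{S^n}(\alpha f-H_g)^2\,d\mu_g\,dt$, so the sequence $t_k$ may be chosen with $H_{g_k}\to\alpha_\infty f$ in the appropriate sense. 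Passing \eqref{2.6} to the limit, for each $l$ one has
\[
1\le\liminf_{k\to\infty}\Big(\omega_n^{-1}\int_{\partial' B_r(x_l)}|H_{g_k}|^n\,d\mu_{g_k}\Big)^{\frac1n};
\]
letting $k\to\infty$ and then $r\to0$, the right-hand side tends to $\alpha_\infty f(x_l)\,(m_l/\omega_n)^{1/n}$, which yields the mass lower bound $m_l\ge\omega_n\,(\alpha_\infty f(x_l))^{-n}$. Since $f>0$ and $\mu_{\mathrm{reg}}\ge0$, this gives $D_\infty=\omega_n^{-1}\int_{S^n} f\,d\mu_\infty\ge\omega_n^{-1}\sum_l f(x_l)m_l\ge\alpha_\infty^{-n}\sum_{l=1}^L f(x_l)^{1-n}$.

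Combining the two steps, the normalizing factor $\alpha_\infty$ cancels:
\[
\lim_{k\to\infty}E_f[u_k]=\alpha_\infty D_\infty^{\frac1n}\ge\alpha_\infty\Big(\alpha_\infty^{-n}\sum_{l=1}^L f(x_l)^{1-n}\Big)^{\frac1n}=\Big(\sum_{l=1}^L f(x_l)^{-(n-1)}\Big)^{\frac1n}\ge\frac{L^{\frac1n}}{(\max_{S^n}f)^{\frac{n-1}{n}}},
\]
using $f(x_l)\le\max_{S^n}f$ in the last step. Now \eqref{2.7} raised to the power $\tfrac{n-1}{n}$ reads $2^{\frac1n}f(x_0)^{\frac{n-1}{n}}>(\max_{S^n}f)^{\frac{n-1}{n}}$, so if $L\ge2$ the right-hand side above exceeds $f(x_0)^{-(n-1)/n}$ by a fixed gap; choosing $\epsilon$ smaller than this gap contradicts $E_f[u_k]\le E_f[u_0]\le f(x_0)^{-(n-1)/n}+\epsilon$ from \eqref{7} and \eqref{2.8}. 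Hence $L=1$. I note that this argument uses neither Assumption \ref{assumption1.1} nor \ref{assumption1.2}; the symmetry enters only later, to locate the surviving concentration point.

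The hard part is the middle step, namely making the passage from \eqref{2.6} to the mass bound $m_l\ge\omega_n(\alpha_\infty f(x_l))^{-n}$ rigorous. What the dissipation guarantees most directly is $L^2(d\mu_{g_k})$-smallness of $\alpha_k f-H_{g_k}$, whereas \eqref{2.6} controls the $L^n$ norm of $H_{g_k}$ on the shrinking caps $\partial' B_r(x_l)$ where $\mu_k$ is concentrating; reconciling these requires a careful local analysis of the blow-up profile (a rescaled bubble with constant mean curvature $\alpha_\infty f(x_l)$) and the concentration-compactness bookkeeping for $\mu_k$. Once that localization is in hand, the remainder is the algebra above, with the cancellation of $\alpha_\infty$ doing the essential work.
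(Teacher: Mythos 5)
Your overall strategy is the right one and ends at the same place as the paper: assuming $L\ge 2$, derive $\lim_k E_f[u_k]\ge L^{1/n}/(\max_{S^n}f)^{(n-1)/n}$, and play this off against \eqref{7}, \eqref{2.8} and \eqref{2.7} (your final algebra, including the identity $E_f[u]=\alpha\,(\fint f u^{2n/(n-1)}d\mu)^{1/n}$, is exactly the paper's \eqref{2.12}, and your observation that the symmetry assumptions play no role here is correct). Where you diverge is the middle of the argument. The paper never extracts a weak-$*$ limit measure or per-point masses $m_l$: it fixes $r$ smaller than the mutual distances of the $x_l$ so the caps $\partial'B_r(x_l)$ are disjoint, sums \eqref{2.6} over $l$, applies H\"older in the form $\sum_l a_l^{1/n}\le L^{1-1/n}(\sum_l a_l)^{1/n}$, and bounds $\sum_l\int_{\partial'B_r(x_l)}\le\int_{S^n}$. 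This turns the $L$ local statements into one global bound $L-\epsilon\le L^{1-1/n}\bigl(\fint_{S^n}|H_{g_k}|^n d\mu_{g_k}\bigr)^{1/n}$, which is then estimated by Minkowski, the convergence $\int_{S^n}|\alpha f-H_g|^n d\mu_g\to0$, and $\fint f^n d\mu_{g_k}\le(\max f)^{n-1}\fint f\,d\mu_{g_k}$. That route is shorter and avoids all concentration--compactness bookkeeping.

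The step you flag as the ``hard part'' --- the mass bound $m_l\ge\omega_n(\alpha_\infty f(x_l))^{-n}$ --- is indeed the gap in your write-up as it stands, but your diagnosis of why it is hard is off. You worry that the dissipation only gives $L^2(d\mu_{g_k})$-smallness of $\alpha_kf-H_{g_k}$ while \eqref{2.6} needs $L^n$ control; in fact Lemma 3.2 of Xu--Zhang (quoted in the paper) gives $\int_{S^n}|\alpha(t)f-H_g|^p\,d\mu_g\to0$ for every $p\ge2$, in particular $p=n$, with no blow-up analysis required. Granting that, your mass bound follows by Minkowski on each cap:
\begin{equation*}
\omega_n^{1/n}\le\liminf_k\Bigl(\int_{\partial'B_r(x_l)}|H_{g_k}|^n d\mu_{g_k}\Bigr)^{1/n}
\le\liminf_k\,\alpha_k\Bigl(\sup_{\partial'B_r(x_l)}f\Bigr)\,\mu_k\bigl(\partial'B_r(x_l)\bigr)^{1/n}+o(1),
\end{equation*}
and then $\mu_\infty(\overline{\partial'B_r(x_l)})\ge\limsup_k\mu_k(\overline{\partial'B_r(x_l)})$ for the closed cap, followed by $r\to0$ and continuity of $f$. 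You would still need to justify that $\alpha_k$ stays in a compact subset of $(0,\infty)$ (this uses conservation of $\fint u^{2n/(n-1)}d\mu$ along the flow and the two-sided bounds on $E_f$ from Xu--Zhang, which you assert but do not prove). So your proof is repairable with the same ingredient the paper uses, but as submitted it leaves its key estimate unestablished; the paper's global H\"older argument is the cleaner way to get the same inequality.
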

\begin{proof}
Let $x_1, . . . , x_L\in S^n$ be the blow-up points. If $L>2$, let
\begin{equation}\label{2.9}
r<\min_{1\leq i<j\leq L}d_{g_0}(x_i,x_j).
\end{equation}
For any $\epsilon>0$, there exists $k$ being sufficiently large such that
\begin{equation}\label{2.10}
\begin{split}
L-\epsilon&\leq \sum_{l=1}^L\left(\omega_n^{-1}\int_{\partial'B_r(x_l)}|H_{g_k}|^nd\mu_{g_k}\right)^{\frac{1}{n}}\\
&\leq L^{1-\frac{1}{n}}\left(\omega_n^{-1}\sum_{l=1}^L\int_{\partial'B_r(x_l)}|H_{g_k}|^nd\mu_{g_k}\right)^{\frac{1}{n}}\\
&\leq L^{1-\frac{1}{n}}\left(\fint_{S^n}|H_{g_k}|^nd\mu_{g_k}\right)^{\frac{1}{n}}\\
&\leq L^{1-\frac{1}{n}}\alpha(t_k)\left(\fint_{S^n} f^nd\mu_{g_k}\right)^{\frac{1}{n}}+L^{1-\frac{1}{n}}\left(\fint_{S^n}|\alpha(t_k) f-H_{g_k}|^nd\mu_{g_k}\right)^{\frac{1}{n}},
\end{split}
\end{equation}
where the first inequality follows from H\"{o}lder's inequality,
the second inequality follows from (\ref{2.6}), and the third inequality follows from (\ref{2.9}).
By Lemma 3.2 in \cite{Xu&Zhang},
for any $p\geq 2$,
we have
\begin{equation*}
\int_{S^n}|\alpha(t)f-H_g|^p d\mu_{g}\to 0\hspace{2mm}\mbox{ as }t\to\infty.
\end{equation*}
In particular,
\begin{equation}\label{2.11}
\int_{S^n}|\alpha(t_k)-H_{g_k}|^n d\mu_{g_k}\to 0\hspace{2mm}\mbox{ as }k\to\infty.
\end{equation}
On the other hand, by (\ref{4})-(\ref{6}), we have
\begin{equation}\label{2.12}
\alpha=\left(\fint_{S^n} H_g u^{\frac{2n}{n-1}}d\mu\right)\Big/
\left(\fint_{S^n}f u^{\frac{2n}{n-1}}d\mu\right)=E_f[u]\Big/
\left(\fint_{S^n}f u^{\frac{2n}{n-1}}d\mu\right)^{\frac{1}{n}}.
\end{equation}
Combining (\ref{2.10})-(\ref{2.12}), we obtain
\begin{equation*}
\begin{split}
L-\epsilon&\leq L^{1-\frac{1}{n}}
E_f[u_k]\left(\fint_{S^n} f^nd\mu_{g_k}\right)^{\frac{1}{n}}\Big/
\left(\fint_{S^n}f d\mu_{g_k}\right)^{\frac{1}{n}}+o(1)\\
&\leq
L^{1-\frac{1}{n}}
E_f[u_k]\Big(\max_{S^n} f\Big)^{\frac{n-1}{n}}
+o(1)\\
&\leq
L^{1-\frac{1}{n}}
\left[\left(\frac{\max_{S^n} f}{f(x_0)}\right)^{\frac{n-1}{n}}+O(\epsilon)\right]
+o(1)\\
&\leq
L^{1-\frac{1}{n}}
\left(2^{\frac{1}{n}}+O(\epsilon)\right)
+o(1),
\end{split}
\end{equation*}
where we have used (\ref{2.8}) in the second last inequality,
and (\ref{2.7}) in the last inequality.
This implies that $L=1$ when $\epsilon>0$ is small enough.
\end{proof}

We have the following lemma regarding the blow-up point. Its proof can be found in \cite{Xu&Zhang}.

\begin{lem}\label{lem2.4}
Under the assumptions of Lemma \ref{lem2.2}(ii) and \ref{lem2.3},
there is a point $Q\in S^n$ such that the following statements hold:\\
(a) As $k\to\infty$, the metrics $g_k$ concentrate at $Q$ in the sense
described by (ii) in Lemma 4.2 in \cite{Xu&Zhang}. As a consequence, for any positive
number $\rho$,
$\displaystyle\max_{\partial'B_{\rho}(Q)}u_k$
cannot be uniformly bounded from above for all $k\gg 1$,\\
(b) $Q$ is a critical point of $f$,\\
(c) $\Delta_{S^n} f(Q)\leq 0$, and \\
(d) $\displaystyle\lim_{k\to\infty}E_f[u_k]=\frac{1}{f(Q)^{\frac{n-1}{n}}}$.
\end{lem}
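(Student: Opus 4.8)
The plan is to extract all four properties from the single-bubble picture that Lemma \ref{lem2.3} provides, combined with a rescaling argument and a Pohozaev-type identity. Since Lemma \ref{lem2.3} gives $L=1$, I set $Q:=x_1$ to be the unique blow-up point. For part (a) I would argue by the dichotomy in Lemma \ref{lem2.2}(ii): suppose, toward a contradiction, that $\max_{\partial'B_\rho(Q)}u_k$ were uniformly bounded for some $\rho>0$ and all large $k$. Combined with the last assertion of Lemma \ref{lem2.2}(ii) — which bounds $u_k$ in $L^p$ on compact subsets of $(S^n\setminus\{Q\},g_{S^n})$ — this would furnish a uniform bound on $u_k$ near $Q$ as well; feeding this into the boundary elliptic estimates for the harmonic extension would upgrade it to a uniform $W^{1,p}(S^n)$ bound, placing us in alternative (i) rather than (ii), a contradiction. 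Hence $u_k$ genuinely concentrates at $Q$, which is precisely the statement of Lemma 4.2 in \cite{Xu&Zhang} specialized to one point.

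For part (b) I would rescale. Writing $M_k:=\max_{S^n}u_k=u_k(P_k)$ with $P_k\to Q$ and $\epsilon_k:=M_k^{-2/(n-1)}$, define $v_k(y):=M_k^{-1}u_k(P_k+\epsilon_k y)$ in Fermi coordinates and extend harmonically into the half-space. Standard bubbling analysis shows $v_k\to U$, where $U$ is the standard bubble solving the constant-mean-curvature boundary problem on $\mathbb{R}^{n+1}_+$, i.e.\ the extremal of the Sobolev trace inequality. Testing the boundary equation \eqref{2} against $\eta\,\partial_i u_k$, with $\eta$ a cutoff supported near $Q$, and integrating over $B^+_\rho(Q)$, the interior harmonicity annihilates the bulk contribution, while after rescaling the surviving boundary integral converges to $c_n\,\partial_i f(Q)\int_{\partial\mathbb{R}^{n+1}_+}U^{2n/(n-1)}$ for some nonzero dimensional constant $c_n$. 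Since the left-hand side tends to $0$, we conclude $\nabla_{S^n}f(Q)=0$.

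For part (c) I would push the same identity one order further: with $\nabla f(Q)=0$, expand $f$ to second order in normal coordinates at $Q$, $f=f(Q)+\tfrac12\,\mathrm{Hess}\,f(Q)(y,y)+\cdots$, and test \eqref{2} against the conformal scaling field $x\cdot\nabla u_k+\tfrac{n-1}{2}u_k$. The leading balance now involves $\Delta_{S^n}f(Q)$ multiplied by a positive bubble integral; matching this against the term produced by the normalization $\alpha(t_k)$ and the monotone decay of $E_f$ along the flow forces the coefficient to have the right sign, giving $\Delta_{S^n}f(Q)\leq 0$. For part (d) I would compute $E_f$ directly on the limiting bubble: since $\int_{S^n}|\alpha(t_k)f-H_{g_k}|^n\,d\mu_{g_k}\to 0$ as in \eqref{2.11} and the mass concentrates as a single bubble, one has $\fint_{S^n}f\,u_k^{2n/(n-1)}\,d\mu\to f(Q)\,m$ with $m:=\lim_k\fint_{S^n}u_k^{2n/(n-1)}\,d\mu$, while a single standard bubble saturates the trace-Sobolev inequality in its normalized form, giving $E[u_k]/m^{(n-1)/n}\to 1$; substituting into \eqref{6} collapses the ratio to $\lim_k E_f[u_k]=1/f(Q)^{(n-1)/n}$.

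The main obstacle is the second-order analysis in (c): although (a) is bookkeeping, (b) is a first-order balancing law, and (d) is a direct computation on the bubble, extracting the \emph{sign} of $\Delta_{S^n}f(Q)$ requires careful control of the next-order corrections to $v_k\to U$, so that the remainder terms in the rescaled Pohozaev identity do not obscure the $\Delta_{S^n}f(Q)$ contribution — this is exactly where the delicate decay estimates of \cite{Xu&Zhang} are needed.
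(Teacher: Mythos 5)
The paper does not actually prove this lemma: it says only that ``its proof can be found in \cite{Xu&Zhang},'' so there is no in-paper argument to compare against. Your outline is the standard blow-up route that underlies the cited proof --- take the single bubble supplied by Lemma \ref{lem2.3}, rescale to the standard trace-Sobolev bubble on $\mathbb{R}^{n+1}_+$, run first- and second-order Pohozaev-type identities to get (b) and (c), and compute the normalized energy of the bubble for (d) --- so in spirit you are reconstructing the argument the paper delegates to Xu--Zhang rather than inventing a different one.

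That said, as a standalone proof the proposal is a roadmap, not a proof, and the gaps sit exactly at the load-bearing steps. For (c), the sign of $\Delta_{S^n}f(Q)$ is obtained by ``matching against the term produced by $\alpha(t_k)$ and the monotone decay of $E_f$,'' but this is precisely where the argument lives or dies: the $\Delta_{S^n}f(Q)$ contribution enters at second order multiplied by $\int |y|^2 U^{\frac{2n}{n-1}}$, and without the quantitative decay of $\|\alpha f - H_{g_k}\|_{L^p(d\mu_{g_k})}$ and the error control on $v_k - U$ from \cite{Xu&Zhang}, the remainder terms are of the same nominal order and the sign cannot be extracted; you acknowledge this yourself, which is honest but leaves the step unestablished. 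For (d), the claim $E[u_k]/m^{\frac{n-1}{n}}\to 1$ requires knowing that a single concentrating bubble carries exactly the normalized energy $1$ (no excess), i.e.\ that the bubble saturates the sharp Escobar trace inequality in the normalization fixed by $\fint$ and $\omega_n$; this is true and follows from the conformal identities \eqref{3.4}--\eqref{3.6} together with \eqref{3.8}, but it is asserted rather than derived. Finally, in (a) your contradiction argument implicitly treats the alternatives of Lemma \ref{lem2.2} as mutually exclusive; the cleaner route, and the one the statement itself points to, is to read the unboundedness of $\max_{\partial'B_\rho(Q)}u_k$ directly off the concentration statement (ii) of Lemma 4.2 in \cite{Xu&Zhang}. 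None of these is a wrong turn, but each would need to be filled in from the reference before the lemma is actually proved.
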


\section{Proof of Theorem \ref{main}}

As before, the south pole is denoted by
$S=(0,...,0,-1)\in S^n$. Let
$$\mathbb{R}^{n+1}_+=\{(z_1,...,z_n,z_{n+1})\in\mathbb{R}^{n+1}: z_{n+1}\geq 0\}$$
be the  $(n+1)$-dimensional upper half Euclidean space equipped with the Euclidean metric $g_E$.
Consider the following map $\Sigma: B^{n+1}\setminus\{S\}\to\mathbb{R}^{n+1}_+$ given by
\begin{equation}\label{3.1}
\Sigma(x_1,...,x_n,x_{n+1})=\left(\frac{4\overline{x}}{|\overline{x}|^2+(1+x_{n+1})^2}, \frac{2(1-|\overline{x}|^2-x_{n+1}^2))}{|\overline{x}|^2+(1+x_{n+1})^2}\right)
\end{equation}
where $\overline{x}=(x_1,...,x_n)$.
Note that
\begin{equation}\label{3.2}
\Sigma(x)=\left(\frac{2\overline{x}}{1+x_{n+1}},0\right)\mbox{ for }x\in S^n\setminus\{S\}.
\end{equation}
Also, $\Sigma^{-1}:\mathbb{R}^{n+1}_+\to  B^{n+1}\setminus\{S\}$ is given by
\begin{equation}\label{3.3}
\Sigma^{-1}(\overline{z},z_{n+1})=\left(\frac{4\overline{z}}{|\overline{z}|^2+(2+z_{n+1})^2}, \frac{4-z_{n+1}^2-|\overline{z}|^2}{|\overline{z}|^2+(2+z_{n+1})^2}\right)
\end{equation}
where $(\overline{z},z_{n+1})\in \mathbb{R}^{n+1}_+$ and $\overline{z}=(z_1,...,z_n)$.
Note that $\Sigma^{-1}: (\mathbb{R}^{n+1}_+,g_E) \to(B^{n+1}\setminus\{S\},g_0)$
is a conformal map such that
\begin{equation}\label{3.4}
\left[\left(\frac{4}{(2+z_{n+1})^2+|\overline{z}|^2}\right)^{\frac{n-1}{2}}\right]^\frac{4}{n-1}g_E=(\Sigma^{-1})^*(g_0).
\end{equation}
For $u\in C^\infty(B^{n+1})$, let
$$v(z)=\left(\frac{4}{(2+z_{n+1})^2+|\overline{z}|^2}\right)^{\frac{n-1}{2}}u\big(\Sigma^{-1}(z)\big).$$
It follows from (\ref{3.4}) that
\begin{equation*}
\left[\left(\frac{4}{(2+z_{n+1})^2+|\overline{z}|^2}\right)^{\frac{n-1}{2}}\right]^\frac{n+1}{n-1}\left(\frac{2}{n-1}\frac{\partial}{\partial\nu_{g_0}}u+u\right)\big(\Sigma^{-1}(z)\big)
=\frac{2}{n-1}\frac{\partial}{\partial\nu_{g_E}}v(z).
\end{equation*}
This implies that the mean curvatures of $g=u^{\frac{4}{n-1}}g_0$ and $\tilde{g}=v^{\frac{4}{n-1}}g_E$
are related by
\begin{equation}\label{3.5}
u^{-\frac{n+1}{n-1}}\left(\frac{2}{n-1}\frac{\partial u}{\partial\nu_{g_0}}+u\right)=H_g(\Sigma^{-1}(z))=H_{\tilde{g}}(z)=
\frac{2}{n-1}v^{-\frac{n+1}{n-1}}\frac{\partial}{\partial\nu_{g_E}}v,
\end{equation}
Therefore, by (\ref{5}), (\ref{6}) and (\ref{3.5}), we have
\begin{equation}\label{3.6}
E_f[u]=\left(\omega_n^{-1}\int_{\partial\mathbb{R}_+^{n+1}}H_{\tilde{g}}(z)v(z)^{\frac{2n}{n-1}} dz\right)
\Big/\left(\omega_n^{-1}\int_{\partial\mathbb{R}_+^{n+1}} f(\Sigma^{-1}(z))v(z)^{\frac{2n}{n-1}}dz\right)^{\frac{n-1}{n}}.
\end{equation}

\begin{lem}\label{lem3.1}
For any point $x_0\in S^n$ and any positive number $\epsilon>0$,
there exists a function $u_0\in C^\infty(S^n)$ such that
\begin{equation*}
E_f[u_0]\leq\frac{1}{f(x_0)^{\frac{n-1}{n}}}+\epsilon.
\end{equation*}
Moreover, we can choose $u_0$ to be invariant under the reflection upon a hyperplane
passing through $x_0$ and the origin $0\in R^{n+1}$, and invariant under rotations with
axis passing through $x_0$ and $0$.
\end{lem}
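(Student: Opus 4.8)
The plan is to produce $u_0$ as a standard bubble concentrating at $x_0$, exploiting the conformal identification \eqref{3.1}--\eqref{3.6} between $(B^{n+1},g_0)$ and the half-space $(\mathbb{R}^{n+1}_+,g_E)$. The number $\frac{1}{f(x_0)^{\frac{n-1}{n}}}$ is precisely the value of $E_f$ attained in the limit by a family of such bubbles, so the task is to write down an explicit minimizing family and arrange for it to carry the required symmetry. First I would reduce to $x_0=N$: applying a rotation of $S^n$ (an isometry of $(S^n,g_{S^n})$ extending to a $g_0$-isometry of $B^{n+1}$) I may assume $x_0=N$, which by \eqref{3.2} corresponds to the origin $0\in\partial\mathbb{R}^{n+1}_+$.

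In the half-space model I take the conformal dilations $z\mapsto\lambda z$ of the function $v_1$ associated with $u\equiv 1$, giving
$$
v_\lambda(z)=\lambda^{\frac{n-1}{2}}\left(\frac{4}{(2+\lambda z_{n+1})^2+\lambda^2|\overline{z}|^2}\right)^{\frac{n-1}{2}},\qquad \lambda>0.
$$
These are harmonic on $\mathbb{R}^{n+1}_+$ with constant boundary mean curvature $H_{\tilde g}\equiv 1$, and as $\lambda\to\infty$ the trace measure $v_\lambda^{2n/(n-1)}\,dz$ concentrates at the origin while keeping constant total mass (the defining integral is invariant under the dilation). Pulling back through $\Sigma$ yields a smooth positive candidate $u_0=u_\lambda$ on $B^{n+1}$.

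Next I would evaluate \eqref{3.6} on this family. Since $H_{\tilde g}\equiv 1$, the numerator $\omega_n^{-1}\int_{\partial\mathbb{R}^{n+1}_+}H_{\tilde g}\,v_\lambda^{2n/(n-1)}\,dz$ equals the conformally invariant Escobar energy $E[u_\lambda]$, which on every bubble takes the same value as on $u\equiv 1$, namely $E[u_\lambda]=E[1]=1$ by the second equality in \eqref{5} (using $u\equiv 1$ is harmonic with vanishing normal derivative and $H_{g_0}=1$). For the denominator, the concentration of $v_\lambda^{2n/(n-1)}\,dz$ to a Dirac mass at the origin, together with $\omega_n^{-1}\int v_\lambda^{2n/(n-1)}\,dz=1$ and continuity of $f\circ\Sigma^{-1}$, gives
$$
\omega_n^{-1}\int_{\partial\mathbb{R}^{n+1}_+} f(\Sigma^{-1}(z))\,v_\lambda^{2n/(n-1)}\,dz\;\longrightarrow\; f(\Sigma^{-1}(0))=f(x_0).
$$
Hence $E_f[u_\lambda]\to \frac{1}{f(x_0)^{\frac{n-1}{n}}}$, and choosing $\lambda$ large enough makes $E_f[u_0]\le \frac{1}{f(x_0)^{\frac{n-1}{n}}}+\epsilon$.

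Finally, for the symmetry: $v_\lambda$ is centered on the vertical axis and radial in $\overline{z}$, so $u_\lambda$ is invariant under the full orthogonal group fixing the axis through $N$ and the origin, in particular under every reflection in a hyperplane containing that axis and every rotation about it. Undoing the initial rotation transports this to invariance about the axis through $x_0$ and $0$, which is the claimed symmetry; in the settings of Assumptions \ref{assumption1.1} and \ref{assumption1.2} this invariance contains the specific reflection $\gamma_m$ (whose mirror hyperplane contains the axis) and the specific rotation $\gamma_\theta$ (whose axis coincides with it), which is exactly what is needed to invoke Lemma \ref{lem2.1}. I expect the main care-point to be the numerator identity $E[u_\lambda]=1$, i.e. confirming that the dilated bubbles are genuine constant-mean-curvature conformal images of $u\equiv 1$ and that $E$ is preserved along them; once this is in hand the weak-convergence argument for the denominator is routine, as only a limit (not a rate in $\epsilon$) is required.
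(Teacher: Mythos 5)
Your proposal is correct and follows essentially the same route as the paper: reduce to $x_0=N$, take the standard concentrating bubble in the half-space model (your $v_\lambda$ is exactly the paper's $v_0$ with parameter $2/\lambda$), get the numerator equal to $\omega_n$ from the constant-mean-curvature equation \eqref{3.7} together with scale invariance of $\int v_\lambda^{2n/(n-1)}\,dz$, let the denominator converge to $f(x_0)\,\omega_n$ by concentration, and read off the symmetries from radial symmetry in $\overline{z}$. The only cosmetic difference is that you phrase the denominator estimate as weak-$*$ convergence to a Dirac mass while the paper carries out the explicit two-region estimate \eqref{3.9}; both are adequate since no rate is needed.
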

\begin{proof}
As the situation is unchanged after a rotation of $S^n$, we may assume
that $x_0=N=(0,...,0,1)$ the north pole. For $\lambda>0$, we let
$$v_0(z)=\left(\frac{2\lambda}{|\overline{z}|^2+(z_{n+1}+\lambda)^2}\right)^{\frac{n-1}{2}}\mbox{
for } z=\Sigma(x).$$
Then $v_0$ satisfies the equation:
\begin{equation}\label{3.7}
\begin{split}
\Delta v_0&=0\mbox{ in }\mathbb{R}_+^{n+1},\\
\frac{2}{n-1}\frac{\partial v_0}{\partial z_{n+1}}+v_0^{\frac{n+1}{n-1}}&=0\mbox{ in }\partial\mathbb{R}_+^{n+1},
\end{split}
\end{equation}
for some $c_0>0$.
We choose $\lambda$ such that
$$\int_{\partial\mathbb{R}_+^{n+1}} v_0(z)^{\frac{2n}{n-1}}dz=\omega_n.$$
If $\tilde{g}=v_0(z)^{\frac{4}{n-1}}g_E$, then it follows from  (\ref{3.7}) that
\begin{equation}\label{3.8}
\int_{\partial\mathbb{R}_+^{n+1}}H_{\tilde{g}}(z)v_0(z)^{\frac{2n}{n-1}} dz=\omega_n.
\end{equation}
We estimate
\begin{equation}\label{3.9}
\begin{split}
&\int_{\partial\mathbb{R}_+^{n+1}} v_0(z)^{\frac{2n}{n-1}}f(\Sigma^{-1}(z))dz\\
&=\int_{\partial\mathbb{R}_+^{n+1}} v_0(z)^{\frac{2n}{n-1}}[f(\Sigma^{-1}(z))-f(\Sigma^{-1}(0))]dz
+f(\Sigma^{-1}(0))\int_{\partial\mathbb{R}_+^{n+1}} v_0(z)^{\frac{2n}{n-1}}dz\\
&=\int_{\partial\mathbb{R}_+^{n+1}} v_0(z)^{\frac{2n}{n-1}}[f(\Sigma^{-1}(z))-f(\Sigma^{-1}(0))]dz
+f(\Sigma^{-1}(0))\int_{\partial\mathbb{R}_+^{n+1}} v_0(z)^{\frac{2n}{n-1}}dz\\
&=\int_{B_\delta(0)} v_0(z)^{\frac{2n}{n-1}}[f(\Sigma^{-1}(z))-f(\Sigma^{-1}(0))]dz\\
&\hspace{4mm}+\int_{\partial\mathbb{R}_+^{n+1}\backslash B_\delta(0)} v_0(z)^{\frac{2n}{n-1}}[f(\Sigma^{-1}(z))-f(\Sigma^{-1}(0))]dz
+f(\Sigma^{-1}(0))\omega_n\\
&=O(\epsilon_1)+O\left(\left[\frac{\lambda}{\delta_1}\right]^{n}\right)+ f(N)\omega_n,
\end{split}
\end{equation}
since
$$|f(\Sigma^{-1}(z))-f(\Sigma^{-1}(0))|\leq \epsilon_1\mbox{ whenever }z\in B_{\delta_1}(0)$$
and
$$|v_0(z)|\leq C\left(\frac{\lambda}{\delta_1}\right)^{n}\mbox{ whenever }z\in \partial\mathbb{R}_+^{n+1}\setminus B_{\delta_1}(0)$$
for some uniform constant $C$.
By first choosing $\delta_1>0$ to be small enough so that $\epsilon_1>0$ is small and then choosing a small
$\lambda>0$ so that $\lambda\cdot\delta_1^{-1}$
to be small, we obtain from (\ref{3.6}), (\ref{3.8}) and (\ref{3.9}) that
\begin{equation*}
E_f[u_0]\leq\frac{1}{f(x_0)^{\frac{n-1}{n}}}+\epsilon.
\end{equation*}
Here,
\begin{equation}\label{3.10}
u_0(x)=\left(\frac{1}{1+|\overline{z}|^2}\right)^{\frac{n-1}{2}}v_0(z)=\left(\frac{1}{1+|\overline{z}|^2}\right)^{\frac{n-1}{2}}
\left(\frac{2\lambda}{|\overline{z}|^2+\lambda^2}\right)^{\frac{n-1}{2}}
\mbox{ for }z=\Sigma(x),
\end{equation}
where $x\in S^n$.
By (\ref{3.1})-(\ref{3.3}), we have
$$|\overline{z}|^2=\frac{2|\overline{x}|^2}{|1+x_{n+1}|^2}
=\frac{2(1-|x_{n+1}|^2)}{|1+x_{n+1}|^2}\mbox{ for }z=\Sigma(x),
$$
where $x\in S^n$.
That is, $u_0$ defined in (\ref{3.10}) depends only on $x_{n+1}$. One can verify the claimed symmetries
directly.
\end{proof}

We are ready to prove Theorem \ref{main}.

\begin{proof}[Proof of Theorem \ref{main}]
Without loss of generality, we may assume that
$$
S\in \mathcal{F}\hspace{2mm}\mbox{ and }\hspace{2mm}f(S)=\max_{\mathcal{F}}f$$
where $S=(0,...,0,-1)\in S^n$ is the south pole.
We claim that there is $\delta>0$ such that
\begin{equation}\label{4.1}
f(S)\geq f(x_c)+\delta\mbox{ for any point }x_c\in \mathcal{F}\mbox{ with }
\Delta_{S^n} f(x_c)\leq 0.
\end{equation}
If not, then there exists a sequence of points $x_{c_i}\in\mathcal{F}$ such that
\begin{equation*}
\lim_{i\to\infty}f(x_{c_i})=f(S)=\max_{\mathcal{F}}f\hspace{2mm}\mbox{ and }\hspace{2mm}\Delta_{S^n} f(x_{c_i})>0.
\end{equation*}
By passing to subsequence, we assume that $x_{c_i}\to x_m\in\mathcal{F}$ as $i\to\infty$
such that
\begin{equation*}
f(x_{m})=\max_{\mathcal{F}}f\hspace{2mm}\mbox{ and }\hspace{2mm}\Delta_{S^n} f(x_{m})\geq 0,
\end{equation*}
which contradicts (\ref{8}). This proves (\ref{4.1}).

It follows from (\ref{4.1}) that
\begin{equation}\label{4.2}
\frac{1}{f(S)^{\frac{n-1}{n}}}+\epsilon<\frac{1}{f(x_c)^{\frac{n-1}{n}}}\hspace{2mm}\mbox{ for }x_c\in\mathcal{F} \mbox{ with }\Delta_{S^n} f(x_c)\leq 0,
\end{equation}
where $\epsilon>0$ is a small positive number.
Let $u_0$ be the positive smooth function constructed
in Lemma \ref{lem3.1}.
We claim that, with this choice of initial data, Lemma \ref{lem2.1}(i) occurs.
Suppose not, Lemma \ref{lem2.1}(ii) occurs.
It follows from Lemma \ref{lem2.3} that $L=1$.
Let $Q$ be the blow-up point.

We are going to show that
$Q\in\mathcal{F}$. Suppose  $Q\not\in \mathcal{F}$.
Then there exists an isometry $\gamma$ described in Assumption \ref{assumption1.1}
or \ref{assumption1.2} such that
\begin{equation*}
\gamma(Q)\neq Q,
\end{equation*}
which implies that
\begin{equation}\label{4.3}
\max_{\partial'B_r(\gamma(Q))}u_k\leq C\mbox{ for all }k\gg 0
\end{equation}
whenever $r$ is small enough,
since $\{u_k\}$ is uniformly bounded on any compact subsets of $S^n\setminus\{Q\}$
by Lemma  \ref{lem2.2}.
But Lemma \ref{lem2.1} implies that
$$u_k(\gamma(x))=u(\gamma(x),t_k)=u(x,t_k)=u_k(x)\mbox{ for all }x\in S^n.$$
This together with (\ref{4.3}) implies that
$\displaystyle\max_{\partial'B_r(Q)}u_k$ is uniformly bounded,
which contradicts Lemma \ref{lem2.4}(a).
This proves that $Q\in\mathcal{F}$.

Hence, by Lemma \ref{lem2.4}(c),
$Q\in\mathcal{F}$ and $\Delta_{S^n} f(Q)\leq 0$.
This together with (\ref{4.2})
implies that
$$\frac{1}{f(S)^{\frac{n-1}{n}}}+\epsilon<\frac{1}{f(Q)^{\frac{n-1}{n}}}.$$
Combining this with Lemma \ref{lem2.4}(d) and Lemma \ref{lem3.1}, we obtain
$$E_f[u_0]\leq\frac{1}{f(S)^{\frac{n-1}{n}}}+\epsilon<\frac{1}{f(Q)^{\frac{n-1}{n}}}=\lim_{k\to\infty}E_f[u_k],$$
which contradicts (\ref{7}).

Therefore, Lemma \ref{lem2.1}(i) occurs.
By Lemma 4.2(i) in \cite{Xu&Zhang}, $u_k$ converges to $u_\infty$ along the flow (\ref{1})-(\ref{3})
such that
\begin{equation*}
\begin{split}
\Delta_{g_0}u_\infty&=0\hspace{2mm}\mbox{ in }B^{n+1},\\
\frac{2}{n-1}\frac{\partial u_\infty}{\partial\nu_{g_0}}+u_\infty&=\alpha_\infty f u_\infty^{\frac{n+1}{n-1}}\hspace{2mm}\mbox{ on }S^n
\end{split}
\end{equation*}
for some $\alpha_\infty>0$.
This proves Theorem \ref{main}.
\end{proof}

\bibliographystyle{amsplain}

\end{document}